\pdfoutput=1
\RequirePackage{ifpdf}
\ifpdf % We~are running pdfTeX in pdf mode
\documentclass[pdftex]{sigma}%,draft
\else
\documentclass{sigma}
\fi

\usepackage[all]{xy}
\usepackage{stmaryrd}

\def\commutatif{\ar@{}[rd]|{\circlearrowleft}}

\numberwithin{equation}{section}

\newtheorem{Theorem}{Theorem}[section]
\newtheorem*{Theorem1}{Theorem~\ref{mainthm}}
\newtheorem*{Theorem2}{Theorem~\ref{mainthmAL}}

\newtheorem{Lemma}[Theorem]{Lemma}
\newtheorem{Proposition}[Theorem]{Proposition}
 { \theoremstyle{definition}
\newtheorem{Remark}[Theorem]{Remark} }

\begin{document}
\allowdisplaybreaks

\newcommand{\arXivNumber}{2105.11123}

\renewcommand{\PaperNumber}{096}

\FirstPageHeading

\ShortArticleName{Generically, Arnold--Liouville Systems Cannot be Bi-Hamiltonian}

\ArticleName{Generically, Arnold--Liouville Systems\\ Cannot be Bi-Hamiltonian}

\Author{Hassan BOUALEM~$^{\rm a}$ and Robert BROUZET~$^{\rm b}$}

\AuthorNameForHeading{H.~Boualem and R.~Brouzet}

\Address{$^{\rm a)}$~IMAG, Universit\'e de Montpellier, France}
\EmailD{\href{mailto:hassan.boualem@umontpellier.fr}{hassan.boualem@umontpellier.fr}}

\Address{$^{\rm b)}$~LAMPS, EA 4217, Universit\'e Perpignan Via Domitia, France}
\EmailD{\href{mailto:robert.brouzet@univ-perp.fr}{robert.brouzet@univ-perp.fr}}

\ArticleDates{Received May 24, 2021, in final form October 22, 2021; Published online October 29, 2021}

\Abstract{We state and prove that a certain class of smooth functions said to be BH-se\-parable is a meagre subset for the Fr\'echet topology. Because these functions are the only admissible Hamiltonians for Arnold--Liouville systems admitting a bi-Hamiltonian structure, we get that, generically, Arnold--Liouville systems cannot be bi-Hamiltonian. At the end of the paper, we determine, both as a concrete representation of our general result and as an illustrative list, which polynomial Hamiltonians $H$ of the form $H(x,y)=xy+ax^3+bx^2y+cxy^2+dy^3$ are BH-separable.}

\Keywords{completely integrable Hamiltonian system; Arnold--Liouville theorem; action-angle coordinates; bi-Hamiltonian system; separability of functions; change of coordinates; Fr\'echet topology; meagre set}

\Classification{26A21; 26B35; 26B40; 37J35; 37J39; 58K15; 70H06}

\begin{flushright}
\begin{minipage}{90mm}
\it We dedicate this work to the memory of Pierre Molino\\ who was our master $($1935--2021$)$
\end{minipage}
\end{flushright}

\section{Introduction}

In the late 1970s and early 1980s appeared the notion of bi-Hamiltonian system in a seminal paper by F.~Magri~\cite{Mag}. This theory was widely developped a few years later by F.~Magri and C.~Morosi in \cite{MaMo}. At the same time, another fundamental work was done in this field by I.~Gel'fand and I.~Dorfman \cite{GD} but, in fact, this theory had already its roots in the so-called ``Lenard recursion formula'' as well explained in~\cite{PS}. In the late 1980s and early 1990s some works were carried out to well understand the link between, on the one hand, those of the Hamiltonian systems which are completely integrable in the sense of Arnold--Liouville and, on~the other hand, the bi-Hamiltonian systems \cite{Br, BrMoTu,Fe, Mar}. We emphasize the fact that the studies in the first three references above presented a completely different approach to the problem than the fourth. Here we focus only on the first approach which is the original approach presented by Magri and Morosi. We also point out that the present work only considers the compatibility of~symplectic forms and never examines the compatibility between degenerate Poisson structures. Moreover, other approaches have been made to provide interesting geometric structures on the phase space of an integrable Hamiltonian system. We can quote for example works of the early 2000's on separability \cite{FP,IMM} or a very recent work on Haantjes structures~\cite{TT}. The~work presented in this article is a natural questioning of the results stated in~\cite{Br, BrMoTu,Fe} leading us to examine a very particular class of Hamiltonians verifying a separability property which seems quite restrictive. Roughly speaking, such Hamiltonians need to be locally separable through a~change of local coordinates which also separates the initial (action) variables of the Hamiltonian. A function satisfying this property will be said {\it BH-separable} and we will denote by ${\cal S}_{\rm BH}$ their set. In a recent work~\cite{BouBr} we studied the topology of a class of smooth functions that we called {\it locally separable}, constituting a set denoted by~${\cal S}$. Precisely, we defined these latter functions in the following way:
a function\footnote{Actually, in this paper, all the functions we treat are considered as Hamiltonians of systems with $2n$ degrees of freedom and should therefore depend {\it a priori} on $2n$ variables. Nevertheless, we underline the fact that we are working in the framework of action-angle coordinates and thus that these Hamiltonians depend only on the $n$ action coordinates, denoted here $x_1,\dots,x_n$.} $H\colon (x_1,\dots,x_n)\mapsto H(x_1,\dots,x_n)$ defined on an open ball centered at the origin $O$ is called a {\it locally separable function} if it can be locally separated
in the sense that there exists a smooth diffeomorphism $\varphi$ (depending on $H$), fixing $O$, from an open neighborhood $V$ of $O$ onto its image $W$
\begin{gather*}
\varphi\colon\quad V\to W,\qquad
(x_1,\dots,x_n)\mapsto(u_1(x_1,\dots,x_n),\dots,u_n(x_1,\dots,x_n)),
\end{gather*}
such that for all $(x_1,\dots,x_n)\in V$ we have
\begin{gather*}
H(x_1,\dots,x_n)=H_1(u_1(x_1,\dots,x_n))+\cdots+H_n(u_n(x_1,\dots,x_n)).
\end{gather*}
The work previously mentioned explained how such separable functions are generic among smooth functions.
Now, in the case of the BH-separability studied in the present work the condition imposed on the functions is more restrictive and consists of asking that not only the function $H$ belongs to ${\cal S}$ but also that one of the smooth diffeomorphisms $\varphi$ separating $H$ also separates the coordinates $x_i$ in the sense that for all $i\in\llbracket 1,n\rrbracket$, we could write for some functions~$x_{ij}$,
\begin{gather*}
\forall i\in\llbracket 1,n\rrbracket,\qquad
x_i=\sum_{j=1}^nx_{ij}(u_j).
\end{gather*}
So, finally $H$ is BH-separable if we can find new local coordinates $(u_1,\dots,u_n)$ around $O$ such that
\begin{gather*}
\forall i\in\llbracket 1,n\rrbracket,\qquad \forall (k,l)\in\llbracket 1,n\rrbracket^2, \qquad k\not=l,\qquad \frac{\partial^2 H}{\partial u_k\partial u_l}= \frac{\partial^2 x_i}{\partial u_k\partial u_l}=0.
\end{gather*}
The aim of this paper is to prove that, this time, unlike the case of locally separable functions studied in \cite{BouBr}, the set of BH-separable functions is meagre.\footnote{We recall that a subset of a metric space is said to be {\it meagre} if it is contained in a countable union of closed subsets without interior points.} So, it will conclude that, generically, Arnold--Liouville completely integrable Hamiltonian systems cannot be bi-Hamiltonian or, in other words, that generically the complete integrability in the sense of Arnold--Liouville cannot be explained and got by the existence of an underlying bi-Hamiltonian structure.

In the first part we will recall the origin of this special separability and how it appears in the context of the bi-Hamiltonian systems. In the second part we will give our main result, stated in two different frames, namely:

\begin{Theorem1}
The set ${\cal S}_{\rm BH}$ of the BH-separable functions is a meagre subset, for the Fr\'echet topology, of the space ${\cal C}^{\infty}(U,\mathbb{R})$ of smooth real functions defined on the open ball $U=B(O,1)$ of $\mathbb{R}^n$.
\end{Theorem1}

\begin{Theorem2} Generically, an Arnold--Liouville system admits none bi-Hamiltonian structure.
\end{Theorem2}

In the third and last part, we will determine among a special class of polynomial Hamiltonians which of them are BH-separable.

\section{The mechanics origin of BH-separability}

In order to understand why we are interested in this strange property of BH-separability we have to recall briefly how it appeared in the frame of bi-Hamiltonian systems, and hence the genesis of the problem.

Let $(M,\omega,H)$ be a Hamiltonian system, namely a symplectic manifold (with a dimension denoted by $2n$) and a smooth function (the Hamiltonian) $H$ on $M$. This Hamiltonian system is said to be {\it completely integrable} if there are $n$ functions $f_1,\dots,f_n$ satisfying the following conditions \cite{AbMa, Arn,LiMa}:
\begin{enumerate}\itemsep=0pt
\item[(1)] they are first integrals of the Hamiltonian vector field $X_H$, i.e.,
\begin{gather*}
\forall i\in\llbracket 1,n\rrbracket,\qquad X_H.f_i={\rm d}f_i(X_H)=\{H,f_i\}=0,
\end{gather*}
where $\{\, ,\,\}$ denotes the Poisson bracket associated with the symplectic form $\omega$;

\item[(2)] they Poisson-commute, or are in involution, in the sense that
\begin{gather*}
\forall i,j\in\llbracket 1,n\rrbracket,\qquad
\{f_i,f_j\}=0;
\end{gather*}

\item[(3)] they are functionaly independent, at least on a dense open subset of $M$, i.e.,
\begin{gather*}
{\rm d}f_1\wedge\cdots\wedge {\rm d}f_n\not=0.
\end{gather*}
\end{enumerate}

In this situation, let us denote by $F$ the function
\begin{gather*}
F\colon\quad M\to\mathbb{R}^n,\qquad
m\mapsto(f_1(m),\dots,f_n(m)).
\end{gather*}
Because of the above property (3), $F$ is a submersion.\footnote{We suppose that the condition (3) is everywhere satisfied, replacing if necessary $M$ by the open set on which it is actually true.} If~$c$ is in the range of $F$ then $F^{-1}(c)$ is a~submanifold of $M$. If~$F$ is a proper map, then $F$ is a fibration (Ehresmann theorem \cite{Ehr}). If~we assume that its fibres are connected then they are $n$-tori $\mathbb{T}^n$. Moreover, any of these fib\-res~$F^{-1}(c)$ possesses a tubular neighborhood $\Omega$ which can be identified, up to a symplectomorphism $\Phi$, to the symplectic manifold $(U\times\mathbb{T}^n,\omega_0)$, where $U$ is some open set of $\mathbb{R}^n$ (that we can assume, without loss of generality, to be some open ball centered at the origin) endowed with coordinates $(x_1,\dots,x_n)$ and $\omega_0$ denotes the canonical symplectic form defined as
\begin{gather*}
\omega_0:=\sum_{i=1}^n{\rm d}x_i\wedge {\rm d}\theta_i,
\end{gather*}
$\theta_1,\dots,\theta_n$ being angle coordinates on the torus; this symplectic identification between $\Omega$ and $U\times\mathbb{T}^n$ can be furthermore factorized via a map $\varphi$ between the basis of the fibration $F$ and the basis $U$ of the trivial fibration $\hbox{pr}_1\colon U\times\mathbb{T}^n\to U$, in such a way that the following diagram be commutative
\begin{gather*}
\xymatrix{
	\Omega \ar[r]^{\Phi} \ar[d]_F \commutatif & U\times\mathbb{T}^n \ar[d]^{\hbox{pr}_1} \\
	F(\Omega) \ar[r]^{\varphi} & U.}
\end{gather*}
Finally, the Hamiltonian $H$ in the coordinates $(x_i,\theta_i)$ depends only on the $x_i$, so is a basic function with respect to the fibration.
All these properties constitute the statement of the classic Arnold--Liouville theorem \cite{AbMa,Arn}; the coordinates $(x_1,\dots,x_n,\theta_1,\dots,\theta_n)$ defined by the map $\Phi$ are called {\it action-angle coordinates}.
In what follows we will call {\it Arnold--Liouville system} the semi-local model described by the Arnold--Liouville theorem for a completely integrable Hamiltonian system verifying the conditions of the theorem, namely
$(U\times\mathbb{T}^n,\omega_0,H)$, where $H$ is a basic function for the trivial fibration, so a function depending only on the action coordinates~$x_i$.

In the early eighties the concept of bi-Hamiltonian system was introduced by Magri and Morosi \cite{MaMo}. Roughly speaking, the main idea of this theory is the following: if there exists a second symplectic form, compatible in a natural sense with the first one, and if the initial Hamiltonian field is also Hamiltonian with respect to this new symplectic structure, then it would be possible to generate mechanically first integrals in involution and so get the complete integrability of the Hamiltonian vector field. Precisely, two symplectic forms $\omega_0$ and $\omega_1$ on a~manifold $M$ define a $(1,1)$ tensor field $J$, called in this context a {\it recursion operator}, by the formula
\begin{gather*}
\forall X,Y\in{\cal X}(M),\qquad \omega_1(X,Y)=\omega_0(JX,Y),
\end{gather*}
where ${\cal X}(M)$ denotes the set of the vector fields on $M$.
These two symplectic forms are said {\it compatible} if the {\it Nijenhuis torsion} of $J$ \cite{Nij} vanishes, i.e.,
\begin{gather*}
\forall X,Y\in{\cal X}(M),\qquad [JX,JY]-J[JX,Y]-J[X,JY]+J^2[X,Y]=0.
\end{gather*}
This torsion appears also in the frame of K\"ahler manifolds and its vanishing is an integrability condition of the eigenspaces distribution.
Now a vector field $X$ is said bi-Hamiltonian with respect to such compatible symplectic forms if it is a Hamiltonian vector field for both forms. Actually, we generally assume that $i_X\omega_0=-{\rm d}H$ (so $X$ is Hamiltonian with respect to $\omega_0$) and that $L_X\omega_1=0$ ($\omega_1$ is invariant by $X$), condition which can also be written ${\rm d}i_X\omega_1=0$, meaning that $X$ is only locally Hamiltonian with respect to $\omega_1$.
What was proved by Magri and Morosi in~\cite{MaMo} is that, for such a bi-Hamiltonian vector field $X$, the functions $\operatorname{Tr}\big(J^k\big)$, $k\in\mathbb{N}$ constitute a~Poisson commuting family of first integrals of $X$. It produces the result that, in the case where at each point $m$ of the manifold, $J_m$ owns the maximum possible number of eigenvalues, namely~$n$, the spectrum $\{\lambda_1,\dots,\lambda_n\}$ of $J$ provides also a~Poisson commuting family of first integrals of~$X$ and so that~$X$ is completely integrable as soon as ${\rm d}\lambda_1\wedge\cdots\wedge {\rm d}\lambda_n\not=0$.
So,~roughly speaking one can say that a bi-Hamiltonian vector field is completely integrable, at least in the case where its recursion operator possesses a convenient spectrum. So the natural question is: what about the converse? If~$X$ is some completely integrable Hamiltonian system, is it possible to find a bi-Hamiltonian structure which explains this integrability, in the sense that the eigenvalues of the associated recursion operator constitute a~Poisson commuting family of first integrals for the given field? At the end of the eighties and the beginning of the nineties some works \cite{Br, BrMoTu, Fe} were done to answer this question for an Arnold--Liouville system, i.e., they study the existence of such a bi-Hamiltonian structure on the whole manifold $U\times\mathbb{T}^n$ and not only locally, this latter problem being almost obvious. The answer was negative because such an existence implies very restrictive conditions on the Hamiltonian, namely that it must be BH-separable in the sense that we have defined it above in this article.
Precisely the next result was proved \cite{Br, BrMoTu, Fe}:

\begin{Theorem}\label{BHthm}
	Let $(M_0=U\times\mathbb{T}^n,H,\omega_0)$ be an ``Arnold--Liouville system'' in the sense that $U=B(O,1)\subset\mathbb{R}^n$, endowed with $($action$)$ coordinates $x_i$, $\omega_0$ is the canonical symplectic form on~$M_0$ defined by $\omega_0=\sum_{i=1}^n{\rm d}x_i\wedge {\rm d}\theta_i$, the $\theta_i$ denoting the angle coordinates on the tori $\mathbb{T}^n$ and~$H$ is a function $($the Hamiltonian$)$ of the~$x_i$. Let us assume that:
\begin{enumerate}\itemsep=0pt	
	\item[$(i)$] This system is bi-Hamiltonian, i.e., there exists on $M_0$ a second symplectic form $\omega_1$, compatible with $\omega_0$, such that the Hamiltonian $($with respect to $\omega_0)$ vector field $X_H$ is also Hamiltonian with respect to $\omega_1$ $($at least locally$)$.
	
	\item[$(ii)$] The number of eigenvalues of the recursion operator associated to the couple $(\omega_0,\omega_1)$ is maximum $($so equal to $n)$.
	
	\item[$(iii)$] The Hamiltonian $H$ is non-degenerate in the sense that the Hessian matrix $\big(\frac{\partial^2H}{\partial x_i\partial q_j}\big)$ is invertible on a dense open subset of $U$.
\end{enumerate}
	
	Then there are around each point of $U$ local coordinates $(u_1,\dots,u_n)$ such that
\begin{gather*}
\forall (k,l)\in\llbracket 1,n\rrbracket^2,\quad
k\not=l,\quad \forall i\in\llbracket 1,n\rrbracket,\qquad
\frac{\partial^2 H}{\partial u_k\partial u_l}=\frac{\partial^2 x_i}{\partial u_k\partial u_l}=0.
\end{gather*}
	In particular the Hamiltonian $H$ is BH-separable.
\end{Theorem}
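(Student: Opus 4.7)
My plan is to build the BH-separating coordinates $u_i$ from the spectrum of the recursion operator $J$ defined by $\omega_1(X,Y)=\omega_0(JX,Y)$, and then to derive each of the two separation identities from the structural equations of the bi-Hamiltonian pair.

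First I would invoke the Magri--Morosi theorem: compatibility of $\omega_0$ and $\omega_1$ together with hypothesis $(ii)$ produces $n$ functionally independent eigenvalues $\lambda_1,\dots,\lambda_n$ of $J$ that Poisson-commute pairwise and with $H$, hence are first integrals of $X_H$. By hypothesis $(iii)$ the frequency map associated to $H$ is a local diffeomorphism, so generic orbits of $X_H$ are dense on their Liouville tori; any continuous first integral must therefore be basic, giving $\lambda_i=\lambda_i(x_1,\dots,x_n)$.

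Next, the vanishing Nijenhuis torsion of $J$ combined with the distinctness of its eigenvalues yields, by the classical Darboux--Nijenhuis theorem, local coordinates $(u_i,\phi_i)$ on $M_0$ in which
\[
\omega_0=\sum_{i=1}^n du_i\wedge d\phi_i, \qquad \omega_1=\sum_{i=1}^n u_i\,du_i\wedge d\phi_i,
\]
with $u_i=\lambda_i$. Since the $u_i$ are basic, they restrict to local coordinates on $U$. Writing $H=H(u)$, one obtains $X_H=\sum_i(\partial H/\partial u_i)\,\partial_{\phi_i}$ and $i_{X_H}\omega_1=-\sum_i u_i(\partial H/\partial u_i)\,du_i$; the condition $d\,i_{X_H}\omega_1=0$ then reduces, after differentiation, to the identities $(u_k-u_l)\,\partial^2 H/(\partial u_k\partial u_l)=0$ for $k\neq l$, and distinctness of the $u_i$ yields the first half of the conclusion.

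The remaining and more delicate half is the separation of the original action coordinates $x_i$. My plan is to exploit the global definedness of $\omega_1$ on $U\times\mathbb{T}^n$: averaging $\omega_1$ over the natural $\mathbb{T}^n$-action preserves both compatibility with $\omega_0$ and the spectrum of $J$, so one may assume $\omega_1$ is $\mathbb{T}^n$-invariant. Expanding $\omega_0=\sum du_j\wedge d\phi_j=\sum dx_i\wedge d\theta_i$ together with $\omega_1=\sum u_j\,du_j\wedge d\phi_j$ in the original $(x,\theta)$ coordinates provides linear relations between $d\phi_j$ and $d\theta_i$ with coefficients controlled by the Jacobian $\partial u_j/\partial x_i$. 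The invariance of $\omega_1$ in the $\theta$ variables, together with its closedness, then forces integrability conditions on these coefficients which, after eliminating the angles and using distinctness of the eigenvalues, read exactly $\partial^2 x_i/(\partial u_k\partial u_l)=0$ for $k\neq l$. This is where I expect the main obstacle to lie: making rigorous the interplay between the global torus topology (which validates the averaging and constrains the possible shape of $\omega_1$) and the purely local Darboux--Nijenhuis picture. This is precisely the technical heart of the proofs given in \cite{Br, BrMoTu, Fe}.
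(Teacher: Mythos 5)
First, a point of order: the paper does not prove Theorem~\ref{BHthm} at all; it quotes it as a result established in \cite{Br,BrMoTu,Fe}, so there is no in-paper argument to measure your proposal against. Judged on its own terms, your sketch follows the broad strategy of those references (the eigenvalues of $J$ are basic functions by the ergodicity argument drawn from~$(iii)$, a Darboux--Nijenhuis normal form, then exploitation of the global torus structure), and your derivation of $(u_k-u_l)\,\partial^2H/\partial u_k\partial u_l=0$ from $d\,i_{X_H}\omega_1=0$ in that normal form is correct.

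Two genuine gaps remain, however. The first concerns the averaging step: the average of $\omega_1$ over the $\mathbb{T}^n$-action is closed and still satisfies $d\,i_{X_H}\bar\omega_1=0$ (because $X_H$ is $\mathbb{T}^n$-invariant), but there is no reason it should remain nondegenerate, keep a vanishing Nijenhuis torsion, or keep the same spectrum --- all three depend nonlinearly on $\omega_1$, so your claim that averaging ``preserves compatibility and the spectrum'' is unjustified and in general false. The correct route, and the one used in the references, is that $\omega_1$ is \emph{automatically} $\mathbb{T}^n$-invariant: $L_{X_H}\omega_1=0$, the orbits of $X_H$ are dense on a dense set of tori by~$(iii)$, and continuity does the rest. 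Relatedly, you take $u_i=\lambda_i$ as coordinates, which presupposes ${\rm d}\lambda_1\wedge\cdots\wedge {\rm d}\lambda_n\neq0$; hypothesis~$(ii)$ only gives $n$ pointwise distinct eigenvalues, so functional independence (or the treatment of the degenerate case) must be argued, not assumed. The second and more serious gap is that the half of the conclusion that carries the actual content of the theorem --- $\partial^2x_i/\partial u_k\partial u_l=0$, i.e.\ that the separating change of variables also separates the original action coordinates --- is only gestured at (``forces integrability conditions which \dots\ read exactly\dots'') and then explicitly deferred to \cite{Br,BrMoTu,Fe}. Without that step the statement collapses to the easy local Darboux--Nijenhuis separation of $H$, which imposes no restriction on $H$ and hence no obstruction; the global input still to be extracted is that the $\theta_i$ are genuine angle coordinates with prescribed periods on the torus fibres, which pins down the $d\phi_j$ in terms of the $d\theta_i$ and the Jacobian $\partial u_j/\partial x_i$ and only then yields the second-order conditions on the $x_i$. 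As written, the proposal does not prove the theorem.
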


At that time, some examples were provided \cite{Br,BrMoTu,Fe} showing some Arnold--Liouville systems for which no such bi-Hamiltonian structure exists, because their Hamiltonians were not BH-separable. The aim of this paper is to state and prove that not only there exists Arnold--Liouville systems with no bi-Hamiltonian structure but also that these cases are not exceptions but the general rule.

\section{The main theorem: from function space to integrability }

\subsection{Frame and preliminary results}

In what follows we will work on the space ${\cal C}^{\infty}(U,\mathbb{R})$ of real functions defined on an open set~$U$ of~$\mathbb{R}^n$ that we can assume to be the open ball $B(O,1)$. This space will be endowed with its structure of Fr\'echet metric space \cite{Hi}.

The first statement allows us to only consider functions for which the $1$-jet at $O$ is zero.

\begin{Proposition}
	Let $H\in{\cal C}^{\infty}(B(O,1),\mathbb{R})$ and let us denote by $J_O^1(H)$ its $1$-jet at $O$. Then,
	$H\in{\cal S}_{\rm BH}$ if and only if $H-J_O^1(H)\in{\cal S}_{\rm BH}.$
	So, the BH-separability is independent of the $1$-jet of the function at the origin.
\end{Proposition}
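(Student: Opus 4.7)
The plan is to exploit the crucial asymmetry in the definition of BH-separability: the decompositions $x_i = \sum_j x_{ij}(u_j)$ are conditions on the \emph{coordinate change} $\varphi$ alone and do not involve $H$, while only the decomposition $H=\sum_j H_j(u_j)$ actually depends on $H$. Consequently, once $\varphi$ is a valid BH-separating diffeomorphism for some function, every affine-in-$x$ expression automatically admits a separated decomposition in the coordinates $(u_1,\ldots,u_n)$ produced by $\varphi$. This is what will make the 1-jet harmless.

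Concretely, write $J_O^1(H)(x) = H(O) + \sum_{i=1}^n a_i x_i$ with $a_i = \partial H/\partial x_i(O)$. For the forward direction, assume $H \in {\cal S}_{\rm BH}$ and let $\varphi$ realise the BH-separation, so $x_i = \sum_{j=1}^n x_{ij}(u_j)$ and $H = \sum_{j=1}^n H_j(u_j)$. Substituting the first identity into the 1-jet gives
\[
J_O^1(H) = H(O) + \sum_{j=1}^n \Bigl(\sum_{i=1}^n a_i x_{ij}(u_j)\Bigr) =: \sum_{j=1}^n K_j(u_j),
\]
after absorbing the constant $H(O)$ into, say, $K_1$. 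Consequently
\[
H - J_O^1(H) = \sum_{j=1}^n \bigl(H_j(u_j) - K_j(u_j)\bigr),
\]
and since the decompositions of the $x_i$ are unchanged, the same diffeomorphism $\varphi$ shows that $H - J_O^1(H)$ is BH-separable.

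For the converse, the argument is literally symmetric: if $\varphi$ BH-separates $H - J_O^1(H)$, then the same computation shows that $J_O^1(H)$ decomposes as $\sum_j K_j(u_j)$ in the coordinates $(u_j)$, so $H = (H - J_O^1(H)) + J_O^1(H)$ also does, while the required $x_i$-decomposition is already provided by the BH-separability of $H - J_O^1(H)$. There is essentially no obstacle here: the entire content of the proposition is the observation that the $x_i$-separation condition is $H$-independent, so affine modifications of $H$ propagate through the definition with the very same choice of $\varphi$.
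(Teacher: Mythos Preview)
Your proof is correct and follows essentially the same approach as the paper. The only cosmetic difference is that you phrase the argument in terms of the explicit decomposition $H=\sum_j H_j(u_j)$, whereas the paper uses the equivalent differential characterization $\frac{\partial^2 H}{\partial u_k\partial u_l}=0$; in both cases the key observation is that the $x_i$-separation condition is independent of $H$, so any affine expression in the $x_i$ is automatically separated by the same $\varphi$.
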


\begin{proof}
	Indeed, $J_O^1(H)=\sum_{i=1}^na_ix_i$, where the $a_i$ are real constants so under a change of coordinates $(x_1,\dots,x_n)\mapsto(u_1,\dots,u_n)$ which separates the $x_i$, we get
\begin{gather*}
\forall (k,l)\in\llbracket 1,n\rrbracket^2, \quad k\not=l,\qquad
\frac{\partial^2 J_O^1(H)}{\partial u_k\partial u_l}
=\sum_{i=1}^na_i\frac{\partial^2 x_i}{\partial u_k\partial u_l}=0.
\end{gather*}
	It follows, because $H=\big(H-J_O^1(H)\big)+J_O^1(H)$, that
	for all	$(k,l)\in\llbracket 1,n\rrbracket^2,$ with $k\not=l$, the vanishing of the quantities
\begin{gather*}
\frac{\partial^2 H}{\partial u_k\partial u_l}\qquad \text{and}\qquad \frac{\partial^2 \big(H-J_O^1(H)\big)}{\partial u_k\partial u_l}
\end{gather*}
are equivalent.	
\end{proof}

So we can always suppose, without loss of generality, that all our functions have $O$ as a~critical point, i.e., that their $1$-jet at $O$ is zero.
From now on we will suppose that we are in this situation.

In \cite{Br,BrMoTu}, or \cite{Fe}, some examples of polynomial Hamiltonians which are not BH-separable were given. For example, as we will see in the last section, a Hamiltonian $H$ as
$H(x,y)=xy+xy^2$ is not BH-separable. However, because its Hessian is not degenerate, this function is locally separable in the sense defined in \cite{BouBr} and mentioned in the introduction; this is a direct consequence of Morse's lemma. So, in a general way, we have

\begin{Proposition}
	${\cal S}_{\rm BH}\subsetneq{\cal S}.$
\end{Proposition}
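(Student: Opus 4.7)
The plan is to split the claim into two parts: the inclusion $\mathcal{S}_{\rm BH}\subseteq\mathcal{S}$, which is a direct unpacking of definitions, and the strictness, which requires exhibiting an explicit function that is locally separable but not BH-separable.

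For the inclusion, I would simply observe that if $H\in\mathcal{S}_{\rm BH}$, then by definition there exist local coordinates $(u_1,\dots,u_n)$ around $O$ such that $\partial^2 H/\partial u_k\partial u_l=0$ for all $k\neq l$ (the extra condition on the $x_i$ being irrelevant here). Integrating, this means $H$ splits as $H_1(u_1)+\cdots+H_n(u_n)$, so $H\in\mathcal{S}$. No serious work is needed for this step.

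For the strict inequality, I would propose the candidate mentioned in the paragraph preceding the statement, namely $H(x,y)=xy+xy^2$. Its Hessian at $O$ is $\left(\begin{smallmatrix}0&1\\1&0\end{smallmatrix}\right)$, which is non-degenerate with indefinite signature, so Morse's lemma provides local coordinates $(u,v)$ near $O$ in which $H$ takes the normal form $H=uv$. This is already locally separable: writing $u=\alpha+\beta$ and $v=\alpha-\beta$ puts $H$ in the form $\alpha^2-\beta^2$, so $H\in\mathcal{S}$.

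The genuinely hard step is showing $H\notin\mathcal{S}_{\rm BH}$, since this must rule out every possible local change of coordinates simultaneously separating $H$, $x$ and $y$. I would not attempt this computation inside the present proof; instead, I would defer it to the explicit classification of BH-separability for the polynomial family $H(x,y)=xy+ax^3+bx^2y+cxy^2+dy^3$ carried out in the last section of the paper, where the case $(a,b,c,d)=(0,0,1,0)$ appears and is shown to fail the BH-separability criterion. Combining Morse's lemma above with that classification yields $H\in\mathcal{S}\setminus\mathcal{S}_{\rm BH}$, establishing strictness and finishing the proof.
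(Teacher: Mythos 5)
Your proposal is correct and follows essentially the same route as the paper: the inclusion is immediate from the definition, and strictness is witnessed by $H(x,y)=xy+xy^2$, which is locally separable by Morse's lemma (non-degenerate Hessian at $O$) but is excluded from ${\cal S}_{\rm BH}$ by the classification of the cubic family in the last section (the point $(a,b,c,d)=(0,0,1,0)$ lies in none of the four BH-separable families). The paper's own justification is exactly the paragraph preceding the statement, citing the same example and the same two ingredients.
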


\begin{Remark}
	Of course the obvious inclusion of ${\cal S}_{\rm BH}$ into ${\cal S}$ gives
	$\mathring{\cal S}_{\rm BH}\subset\mathring{\cal S}.$
	So, if we use a recent work \cite{BouBr} which studies $\mathring{\cal S}$ we can have some informations about $\mathring{\cal S}_{\rm BH}$ Nevertheless, it~would be impossible by this mean to conclude that it is a meager subset of the space of smooth functions.
\end{Remark}

Finally, we recall the Fa\`a Di Bruno's formula \cite{AMM1, AMM2} which will furnish a precious tool in the proof of our main theorem in the next subsection:

\begin{Theorem}[Fa\`a Di Bruno's formula\footnote{This Fa\`a Di Bruno's formula is somewhat frightening at first sight! Perhaps a little illustration might clarify this. Let us consider the simple case where $H$ depends only on one variable and let us try to visualise what $P$ is, what $B$ is and so on. For the choice $k=3$, we get
\begin{gather*}
(H\circ\Phi)^{(3)}=H^{(3)}\Phi'^3+3H''\Phi'\Phi''+H'\Phi^{(3)},
\end{gather*}
			where actually all the terms in $H$, or its derivatives, in the right hand side, have to be understood as $H\circ\Phi$, $H'\circ\Phi$ and so on. Now, what is ${\cal P}_3$ and the different $P$ and $B$ here? We have
\begin{gather*}
{\cal P}_3=\{\{\{1,2,3\}\},\{\{1,2\},\{3\}\},\{\{1,3\},\{2\}\}, \{\{2,3\},\{1\},\{\{1\},\{2\},\{3\}\}\}.
\end{gather*}
			For $P=\{\{1,2,3\}\}$ we have $\vert P\vert=3$ and only one $B$ in $P$, namely $\{1,2,3\}$, so $\vert B\vert=1$. The corresponding term in the Fa\`a Di Bruno's formula is then $H^{(3)}(\Phi',\Phi',\Phi')$ (do not forget that $H^{(3)}$ is a trilinear form) and so in this context of one variable $H^{(3)}\Phi'^3$. The others terms follow in the same way.}]
	
	Let $H$ be a real function defined on an open subset~$\Omega$ of $\mathbb{R}^n$ and $\phi$ defined on an open subset $\cal O$ of $\mathbb{R}^n$ with values in $\Omega$.
	
	If $H$ and $\phi$ are functions of class ${\cal C}^k$ then for all $x=(x_1,\dots,x_n)$ in $\cal O$ we have
\begin{gather*}
(H\circ\phi)^{(k)}(x)=\sum_{P\in{\cal P}_k } H^{(\vert P\vert)}(\phi(x))\big(\phi^{(\vert B\vert)}(x)\big)_{B\in P},
\end{gather*}
	where for any smooth function $f$ of $n$ variables $f^{(k)}$ denotes its differential of order $k$, ${\cal P}_k$ denotes the set of partitions of $\llbracket 1,k\rrbracket $ and $\vert P\vert$ denotes the cardinality of $P$.
\end{Theorem}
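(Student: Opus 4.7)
The plan is to prove the formula by induction on $k$. The base case $k=1$ is the chain rule: $\mathcal{P}_1=\{\{\{1\}\}\}$, and the only summand is $H'(\phi(x))(\phi'(x))$, which equals $(H\circ\phi)'(x)$.

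For the inductive step, assume the formula holds at order $k$ and differentiate each summand via the product rule applied to the product of the $|P|+1$ factors $H^{(|P|)}(\phi(x))$ and $\bigl(\phi^{(|B|)}(x)\bigr)_{B\in P}$. Two types of contributions appear. Type~(a): differentiating $H^{(|P|)}\circ\phi$ by the chain rule raises the order of $H$ by one and introduces a new slot filled by $\phi'(x)$, producing $H^{(|P|+1)}(\phi(x))$ evaluated on the family $\bigl(\phi^{(|B|)}(x)\bigr)_{B\in P}$ together with an additional $\phi'(x)$. Type~(b): differentiating a single factor $\phi^{(|B|)}(x)$ for some $B\in P$ replaces it by $\phi^{(|B|+1)}(x)$, leaving the other factors and $H^{(|P|)}$ unchanged.

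The key step is now combinatorial: match these derivatives with the elements of $\mathcal{P}_{k+1}$. Any $Q\in\mathcal{P}_{k+1}$ falls into exactly one of two cases. Either $\{k+1\}$ is a singleton block of $Q$, and then $Q$ is obtained uniquely from the pair $(P,\text{type (a)})$ where $P=Q\setminus\{\{k+1\}\}\in\mathcal{P}_k$; or $k+1$ belongs to a block $B'$ of $Q$ of size $\geq 2$, and then $Q$ is obtained uniquely from $(P,\text{type (b)})$ where $P\in\mathcal{P}_k$ is obtained from $Q$ by deleting $k+1$ from $B'$, with $B=B'\setminus\{k+1\}$. By symmetry of the multilinear form $H^{(|Q|)}$ in its arguments, the orderings of slots are irrelevant, and summing the contributions reproduces exactly $\sum_{Q\in\mathcal{P}_{k+1}} H^{(|Q|)}(\phi(x))\bigl(\phi^{(|B|)}(x)\bigr)_{B\in Q}$.

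The main obstacle is bookkeeping rather than any deep analytic point: one must verify that the bijection above counts each partition of $\llbracket 1,k+1\rrbracket$ exactly once and that the slots of the multilinear forms line up correctly, so that no combinatorial coefficient is lost. An alternative route would be to compare Taylor series coefficients through exponential generating functions of compositions, but the inductive argument sketched here is the most self-contained.
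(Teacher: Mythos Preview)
Your inductive proof is correct and is the standard argument for the Fa\`a di Bruno formula. Note, however, that the paper does not supply a proof of this theorem at all: it is stated as a classical tool and referenced to the historical surveys \cite{AMM1,AMM2}, with additional pointers to \cite{Har,Ma} for partial-derivative variants. So there is no ``paper's proof'' to compare against; your argument simply fills in what the authors take for granted. The induction with the bijection between $\mathcal{P}_{k+1}$ and pairs (partition of $\llbracket 1,k\rrbracket$, choice of block or new singleton for $k{+}1$) is exactly the canonical route, and your appeal to the symmetry of $H^{(|Q|)}$ to handle the slot ordering is the right way to dispose of the bookkeeping.
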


Formulae of Fa\`a Di Bruno type, but for partial derivatives were given by M. Hardy \cite{Har} and T-W. Ma \cite{Ma}. But, for our purpose, it is not necessary to use them.

\subsection{Main theorem (function space topology version)}

Now, we can state and prove the first version of the main result of this paper, purely stated in terms of function space topology.

\begin{Theorem}\label{mainthm}	
	The set ${\cal S}_{\rm BH}$ of the BH-separable functions is a meagre subset, for the Fr\'echet topology, of the space ${\cal C}^{\infty}({\cal U},\mathbb{R})$ of smooth real functions defined on the open ball ${\cal U}=B(O,1)$ of $\mathbb{R}^n$.
	
\end{Theorem}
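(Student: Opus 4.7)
The plan is to realise ${\cal S}_{\rm BH}$ as a countable union $\bigcup_{N\geq 1} A_N$ of closed nowhere dense subsets of ${\cal C}^\infty({\cal U},\mathbb{R})$, the decomposition coming from quantitative bounds on the BH-data and the nowhere-denseness from a Fa\`a Di Bruno jet-dimension count at the origin.

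For each $N\geq 1$, I would let $A_N$ be the set of $H\in{\cal C}^\infty({\cal U},\mathbb{R})$ (with $J_O^1 H=0$, as allowed by the first proposition) for which there exists a BH-separating datum $(\varphi,H_1,\ldots,H_n,x_{11},\ldots,x_{nn})$ defined on $B(O,1/N)$, with $|\det {\rm d}\varphi|\geq 1/N$ and with all $C^N$-norms of $\varphi,\varphi^{-1},H_j,x_{ij}$ bounded by $N$. Every $H\in{\cal S}_{\rm BH}$ has a smooth BH-separating datum on some neighbourhood of $O$ which, restricted to a small enough ball, satisfies such bounds, so ${\cal S}_{\rm BH}\subseteq\bigcup_N A_N$. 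Closedness of each $A_N$ is a routine Arzel\`a--Ascoli argument: a Fr\'echet-convergent sequence $H^{(m)}\to H$ in $A_N$ yields $C^N$-bounded data, hence $C^{N-1}$-convergent subsequences, whose limits implement a BH-separation of $H$ with the same bounds.

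The core is proving that each $A_N$ has empty interior. Fix $k\in\mathbb{N}$ so that $\binom{n+k}{n}>(n+n^2)(k+1)$; such $k$ exists for $n\geq 2$ since the left-hand side grows like $k^n/n!$ while the right-hand side is linear in $k$. Applying Fa\`a Di Bruno's formula to $H=\tilde H\circ\varphi$ with $\tilde H(u)=\sum_j H_j(u_j)$, and combining this with the additive expansion $\varphi^{-1}(u)_i=\sum_j x_{ij}(u_j)$ together with the inverse function theorem (which expresses the Taylor jet of $\varphi$ algebraically in terms of that of $\varphi^{-1}$), one sees that every Taylor coefficient of $H$ at $O$ up to order $k$ is a smooth function of the order-$\leq k$ Taylor coefficients of the $n$ functions $H_j$ and the $n^2$ functions $x_{ij}$. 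This defines a smooth map
\begin{gather*}
\Psi_k\colon \mathbb{R}^{(n+n^2)(k+1)}\longrightarrow J^k_O\cong\mathbb{R}^{\binom{n+k}{n}},
\end{gather*}
whose image contains $\pi_k(A_N)$, where $\pi_k$ is the continuous $k$-jet-at-$O$ map (which is open, by the open mapping theorem applied to a continuous linear surjection onto a finite-dimensional space). Since the source dimension is strictly less than the target dimension, Sard's theorem forces $\Psi_k\bigl(\mathbb{R}^{(n+n^2)(k+1)}\bigr)$ to have Lebesgue measure zero and therefore empty interior in $J^k_O$. Since $\pi_k$ is open, the preimage $\pi_k^{-1}\bigl(\Psi_k(\mathbb{R}^{(n+n^2)(k+1)})\bigr)$ has empty interior in ${\cal C}^\infty({\cal U},\mathbb{R})$, and so does its subset $A_N$.

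Combining, each $A_N$ is closed and nowhere dense, hence $\bigcup_N A_N$ is meagre and so is ${\cal S}_{\rm BH}$. I expect the main technical obstacle to be the careful setup of $\Psi_k$: one must parameterise the BH-data so that its effective degrees of freedom (absorbing gauge freedoms such as constants in the $H_j$ or $x_{ij}$, and the permutation/relabelling ambiguity in the $u_j$) really are controlled by a source dimension strictly smaller than $\binom{n+k}{n}$, and one must feed Fa\`a Di Bruno through the inverse function theorem in order to recover the Taylor jet of $H$ in the $x$ variables from data naturally expressed in the $u$ variables. Once $\Psi_k$ is set up, Sard delivers the rest.
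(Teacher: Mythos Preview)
Your core strategy coincides with the paper's: both arguments show that the $k$-jet at $O$ of any BH-separable function lies in the image of a smooth map from a parameter space of dimension roughly $(n+n^2)k$ into the jet space, the map being built from Fa\`a Di Bruno together with the inverse-function relation between the jets of $\varphi$ and $\varphi^{-1}$, and both conclude via a dimension count that this image has Lebesgue measure zero once $k$ is large enough. Your treatment of the $k$-jet (rather than the homogeneous degree-$k$ part) even handles $n=2$ and $n\geq 3$ uniformly, which the paper does in two separate cases.

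There is, however, a gap in your closedness step, and it is precisely the step the paper avoids. When $H^{(m)}\to H$ with $C^N$-bounded BH-data, Arzel\`a--Ascoli gives $C^{N-1}$-convergent subsequences, but the limiting datum is a priori only $C^{N-1}$, not $C^N$ and certainly not $C^\infty$; so you cannot conclude $H\in A_N$ as you defined it, nor that $H\in{\cal S}_{\rm BH}$. The paper bypasses this entirely: instead of proving any $A_N$ closed, it writes the (open, finite-dimensional) domain of $\Psi_k$ as a countable union of compacts $K_i$, so that ${\cal S}_{\rm BH}\subseteq\bigcup_i\pi_k^{-1}(\Psi_k(K_i))$. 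Each $\Psi_k(K_i)$ is compact, hence closed, and of measure zero, hence without interior; closedness of $\pi_k^{-1}(\Psi_k(K_i))$ then follows from continuity of $\pi_k$ alone, and openness of $\pi_k$ gives empty interior. In other words, once you have built $\Psi_k$, the $A_N$ and the Arzel\`a--Ascoli argument are superfluous: the countable decomposition should live in the finite-dimensional jet-parameter space, where compactness is cheap, not in the infinite-dimensional space of BH-data.
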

\begin{proof} First, let us introduce some useful notations. If $f$ is a smooth function, let us denote by~$T_k(f)$ the homogeneous part of degree $k$ of its Taylor's expansion at the origin; so $T_k(f)$ belongs to the space ${\cal H}_k^n$ of the homogeneous polynomials of degree $k$ with $n$ indeterminates $X_1,\dots,X_n$. We will denote also by $J_O^k(f)$ its $k$-jet at the origin defined as
\begin{gather*}J_O^k(f)=T_0(f)+T_1(f)+\cdots+T_k(f),\end{gather*}
	which is nothing but its Taylor polynomial at $O$ and with degree $k$.
	Now, by the very definition of the set ${\cal S}_{\rm BH}$, the fact that the function $f$ belongs to ${\cal S}_{\rm BH}$ means that there exists:
\begin{itemize}\itemsep=0pt
\item a change of variables $U \colon (x_1, \dots, x_n)\mapsto (u_1, \dots, u_n)$,
	
\item $n$ functions $g_1, \dots, g_n$ of class $C^\infty$ depending on only one variable,
	
\item $n^2$ functions $(f_{ij})_{1\leq i, j\leq n}$ of class $C^\infty$ depending only on one variable, such that, in a~neighborhood of the origin, the functions $f,x_1,\dots,x_n$ can be written as
	\begin{gather}
	f(x_1, \dots, x_n)=g_1(u_1)+\cdots+g_n(u_n),\nonumber
\\
\forall i\in\llbracket 1,n\rrbracket,\qquad x_i=\sum_{j=1}^nf_{ij}(u_j).\label{BHS}
	\end{gather}
\end{itemize}
	Let us denote respectively by $G$ and $F$ the functions
\begin{gather*}
G(u_1,\dots, u_n)=g_1(u_1)+\cdots+g_n(u_n)
\end{gather*}
and
\begin{gather*}
F(u_1,\dots, u_n)=\Bigg(\sum_{j=1}^nf_{1j}(u_j),\dots,\sum_{j=1}^nf_{nj}(u_j)\Bigg).
\end{gather*}
	By means of these functions we can write the relations (\ref{BHS}) in a condensed way, namely
\begin{gather*}
f=G\circ U\qquad \text{and}\qquad F\circ U={\rm id}_{W},
\end{gather*}
	where $W$ is an open neighborhood of the origin in $\mathbb{R}^n$.
	Differentiating the relation $F\circ U={\rm id}_{W}$ at $O$ we get that ${\rm d}F(O)\circ {\rm d}U(O)={\rm id}_{\mathbb{R}^n}$ so ${\rm d}U(O)={\rm d}F(O)^{-1}$, or with notations used in the Fa\`a di Bruno's formula, $U'(O)=F'(O)^{-1}$. Using Jacobian matrix, this relation can be written
\begin{gather*}
{\rm Jac}_O(U)=\frac{1}{\det{\rm Jac}_O(F)} \,{}^t{\rm Com}({\rm Jac}_O(F)),
\end{gather*}
	where ${\rm Com}(M)$ denotes the comatrix of a matrix $M$. It follows from this formula that each partial derivative $\frac{\partial u_i}{\partial x_p}$ is a rational fraction $\frac{P_{ip}}{Q_{ip}}$ of the numbers $f'_{rs}(0)$. The coefficients of this rational fraction are independent from $U$, $F$ and $G$; they only depend on the indices~$i$ and~$p$.
	On~the other hand,
\begin{gather*}
\frac{\partial f}{\partial x_p}(O)=\sum_{i=1}^ng_i'(0)\frac{\partial u_i}{\partial x_p}(O).
\end{gather*}
	So, let us introduce the application
\begin{gather*}
{\cal L}_1\colon\quad \mathbb{R}^n\times {\rm GL}_n(\mathbb{R})\to{\cal H}_1^n,\qquad
	(a_1,\dots,a_n,(b_{rs})_{1\leq r,s\leq n})\mapsto\sum_{p=1}^n\Bigg(\sum_{i=1}^n a_i\frac{P_{ip}}{Q_{ip}}(b_{rs}) \Bigg)X_p,
\end{gather*}
	where ${\cal H}_1^n$ denotes the space of homogeneous polynomial functions of degree $1$ with $n$ indeterminates. Then the previous discussion shows that if $f$ belongs to ${\cal S}_{\rm BH}$, then $T_1(f)$ belongs to the range of the application ${\cal L}_1$.
	
	Next, using the Fa\`a di Bruno's formula, we get the differential of order $2$ at $O$ of $U$, namely
\begin{gather*}
U''(O)(.,.)=-F'(O)^{-1}\big[F''(O)\big(F'(O)^{-1}(.),F'(O)^{-1}(.)\big)\big].
\end{gather*}
	We deduce from this relation that each of the second partial derivatives $\frac{\partial^2u_i}{\partial x_p\partial x_q}(0)$ is a rational fraction $\frac{R_{ipq}}{S_{ipq}}$ of the numbers $f'_{rs}(0)$ and $f''_{rs}(0)$, the latter not appearing in the denominators. Here again the coefficients of these fractions depend only on the indices $i$, $p$ and $q$ and not on the functions $U$, $F$ and $G$.
	
	On the other hand, we have also
\begin{gather*}
\frac{\partial^2 f}{\partial x_p\partial x_q}(O)=\sum_{i=1}^{n}g_i''(0)\frac{\partial u_i}{\partial x_p}(O)\frac{\partial u_i}{\partial x_q}(O)+g_i'(0)\frac{\partial^2 u_i}{\partial x_p\partial x_q}(O).
\end{gather*}
	
	So, let us introduce the map
\begin{gather*}
{\cal L}_2\colon\ \mathbb{R}^n\times\mathbb{R}^n\times {\rm GL}_n(\mathbb{R})\times {\cal M}_n(\mathbb{R})\to{\cal H}_2^n,
\end{gather*}
where
\begin{gather*}
{\cal L}_2(a_1, \dots, a_n, b_1, \dots, b_n, A,M):=\sum_{1\leq p,q\leq n}\Bigg(\sum_{i=1}^{n}a_i\frac{P_{ip}}{Q_{ip}}(A)\frac{P_{iq}}{Q_{iq}}(A) +b_i\frac{R_{ipq}}{S_{ipq}}(M)\Bigg)X_pX_q.
\end{gather*}
	As in the case of first derivatives, if $f$ belongs to ${\cal S}_{\rm BH}$, then $T_2(f)$ belongs to the range of the application ${\cal L}_2$.
	What happens for higher degrees of derivation? Using induction, we can easily prove that, around $O$, for all positive integer $k$, each of $k$-th order partial derivatives of the $u_i$ are rational fractions of all the derivatives of the functions $f_{rs}$. It is true for $k=1$. Indeed, we~have seen that at the origin but actually it is also true near $O$ because $F'(O)$ is invertible and so $F'(u)$ remains invertible for $u$ close to $O$. Now, if we have
\begin{gather*}
\frac{\partial^ku_i}{\partial x_{i_1}\cdots\partial{x_{i_k}}}=R\big(f'_{rs},\dots,f^{(k)}_{rs}\big),
\end{gather*}
where $R$ is some rational function with $m$ variables $t_1,\dots,t_m$, then
\begin{gather*}
\frac{\partial^{k+1}u_i}{\partial x_{i_1}\cdots\partial{x_{i_k}}\partial{x_{i_{k+1}}}}=\sum_{j=1}^m\frac{\partial R}{\partial t_j}\big(f'_{rs},\dots,f^{(k)}_{rs}\big)\frac{\partial t_j}{\partial x_{i_{k+1}}},
\end{gather*}
where, for the sake of simplicity, we denote by $t_j$ in $\frac{\partial t_j}{\partial x_{i_{k+1}}}$ something which is some $f^{(l)}_{rs}(u_s)$. So~actually,
\begin{gather*}
\frac{\partial t_j}{\partial x_{i_{k+1}}}=f^{(l+1)}_{rs}(u_s)\frac{\partial u_s}{\partial x_{i_{k+1}}}.
\end{gather*}
	Now, since the $\frac{\partial R}{\partial t_j}$ are rational fractions of the $f'_{rs},\dots,f^{(k)}_{rs}$ and the $\frac{\partial u_s}{\partial x_{i_{k+1}}}$ are also rational fractions of the $f'_{rs}$, the announced result is proved by induction.
	
	Let us return now to the derivatives of $f$. With the notations used in the Fa\`a di Bruno's formula, we have
\begin{gather*}
\frac{\partial^kf}{\partial x_{i_1}\cdots\partial{x_{i_k}}}(O)=\sum_{i=1}^n\sum_{P\in P_k}g_i^{(\vert P\vert)}(0)\prod_{B\in P}\frac{\partial^{\vert B\vert} u_i}{\prod_{j\in B}{\partial x_{i_j}}}(O).
\end{gather*}
	We can write this last formula by ordering terms with respect to the length of $P$, namely
\begin{gather*}
\frac{\partial^kf}{\partial x_{i_1}\cdots\partial{x_{i_k}}}(O)=\sum_{i=1}^n\sum_{l=1}^kg_i^{(l)}(0)\sum_{P\in P_k,\ \vert P\vert=l}\prod_{B\in P}\frac{\partial^{\vert B\vert} u_i}{\prod_{j\in B}{\partial x_{i_j}}}(O),
\end{gather*}
	the term $\sum_{P\in P_k,\ \vert P\vert=l}\prod_{B\in P}\frac{\partial^{\vert B\vert} u_i}{\prod_{j\in B}{\partial x_{i_j}}}(O)$ being a rational fraction $R_{ii_1\dots i_k}(M_1,M_2,\dots,M_k)$, where $M_j$ is the matrix $\big(f^{(j)}_{rs}(0)\big)_{r,s}$ and so $M_1$ is invertible.
	Then we can define the map
\begin{gather*}
{\cal L}_k\colon\ (\mathbb{R}^n)^k\times {\rm GL}_n(\mathbb{R})\times(M_n(\mathbb{R}))^{k-1}\to{\cal H}_n^k,
\end{gather*}
by
\begin{gather*}
{\cal L}_k\big(a^1,\dots,a^k,M_1,\dots,M_k\big)=\!\!\sum_{1\leq i_1,\dots,i_k\leq n}\!\!\Bigg(\sum_{i=1}^n\sum_{l=1}^ka_i^lR_{ii_1\dots i_k}(M_1,M_2,\dots,M_k)\Bigg)X_{i_1}\cdots X_{i_k},
\end{gather*}
in such a way that if a function $f$ belongs to ${\cal S}_{\rm BH}$ then for all positive integers $k$, $T_k(f)$ is in the range of ${\cal L}_k$.
	The dimension of the domain of the map ${\cal L}_k$ is $k\big(n+n^2\big)$ whereas those of its codomain ${\cal H}_k^n$ is
\begin{gather*}
\operatorname{dim}{\cal H}_k^n=\frac{n(n+1)\cdots(n+k-1)}{k!},
\end{gather*}
	which we can also write as
\begin{gather*}
\operatorname{dim}{\cal H}_k^n=\frac{(k+n-1)(k+n-2)\cdots (k+1)}{(n-1)!},
\end{gather*}
	showing that it is a polynomial function of $k$ with a degree $n-1$. So if $n\geq 3$, its degree is a least quadratic and so its increase at infinity ensures that for $k$ sufficiently large it will be strictly larger than the linear expression $k\big(n+n^2\big)$ and so the inequality
	\begin{gather}\label{ineq}
	k\big(n+n^2\big)<\operatorname{dim}{\cal H}_k^n=\frac{n(n+1)\cdots(n+k-1)}{k!}
	\end{gather}
	holds.
	On the contrary, for $n=2$, the two compared dimensions are respectively $6k$ and $k+1$ and so the inequality $6k<k+1$ is always false. For that reason we will have to deal separately with the cases $n\geq 3$ and $n=2$.
	Let us begin with the case $n\geq 3$.
	Let $k$ be an integer such that the inequality (\ref{ineq}) holds. Then the range of ${\cal L}_k$ has a Lebesgue measure equal to zero. We know that $\mathbb{R}^{kn}\times\mathbb{R}^{kn^2}$ is a countable union of compact sets $(K_i)_{i\geq0}$, so the range of ${\cal L}_k$ is a~countable union of the compact sets $({\cal L}_k(K_i))_{i\geq0}$ and each ${\cal L}_k(K_i)$ has no interior points since its measure is zero. Because $T_k$ is an open and continuous map we get that each $T_k^{-1}({\cal L}_k(K_i))$ is a closed set with no interior points. So we deduce that ${\cal S}_{\rm BH}$ is meagre.
	
	It remains to examine the case $n=2$. In this case, we can proceed as previously but by replacing $T_k(f)$ by the $k$-jet of $f$, namely $J^k_O(f)$. This time we introduce a map
\begin{gather*}
\widehat{\cal L}_k\colon\ \mathbb{R}^{2k}\times {\rm GL}_2(\mathbb{R})\times\mathbb{R}^{4(k-1)}\times\mathbb{R}\to\mathbb{R}_k[x,y],
\end{gather*}
	defined by
\begin{gather*}
\widehat{\cal L}_k(a,b,c,d)=d+\sum_{i=1}^{k}{\cal L}_i(a,b,c),
\end{gather*}
	where $\mathbb{R}_k[x,y]$ denotes the space of homogeneous polynomial functions with two indeterminates and total degree $\leq k$.
	With these notations the expression
\begin{gather*}
\widehat{\cal L}_k\left(\big(g_i^{(j)}\big)_{(i,j)\in[\![1,n]\!]\times[\![1,k]\!]},F^{(l)}(0))_{l\in[\![1,k]\!]},\lambda\right)
\end{gather*}
	is nothing else than the $k$-jet of $f$. The function $f$ belongs to ${\cal S}_{\rm BH}$ then $J^k_O(f)$ is in the range of $\widehat{\cal L}_k$.
	We know that $\mathbb{R}_k[x,y]$ has dimension equal to $\frac{(k+2)(k+1)}{2}$. For $k>9$ we have $6k+1<\frac{(k+1)(k+2)}{2}$, so the range of $\widehat{\cal L}_k$ has its measure equal to zero. Thus, we can apply the same arguments as in the case $n\geq3$ and conclude that ${\cal S}_{\rm BH}$ is a meagre set.
\end{proof}

\subsection{Main theorem (integrability version): Arnold--Liouville {\it\bfseries vs} BH-systems}

Before we state and prove our main result, we need to introduce some notations and special subsets of the space of smooth functions and to state a lemma.
For $H\in{\cal C}^\infty(B(O,1))$ let us define the open set $\Omega_H$ by
\begin{gather*}
\Omega_H:=\big\{x\in B(O,1),\, \operatorname{rank}(\operatorname{Hess}_x(H))=n\big\}.
\end{gather*}
Let us also introduce the two sets
\begin{gather*}
{\cal ND}:=\big\{H\in {\cal C}^\infty(B(O,1)),\ B(O,1)\subset\overline{\Omega_H}\big\}\qquad \text{and}\qquad {\cal D}={\cal C}^\infty(B(O,1))\setminus{\cal ND}.
\end{gather*}
A function of the first set is said to be {\it non-degenerate}; it is exactly the condition assumed on the Hamiltonians of Theorem \ref{BHthm}. A function of the second set, which we will say {\it degenerate}, has a Hessian matrix with a vanishing determinant on at least a small open ball $B(a,r)$ centered on a point $a\in B(O,1)$.

\begin{Lemma}
	The set ${\cal D}$ is meagre.
\end{Lemma}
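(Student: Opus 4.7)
The plan is to express $\mathcal{D}$ as a countable union of closed sets with empty interior. The natural decomposition is to index over a countable basis of balls: for each pair $(a,r)$ with $a\in\mathbb{Q}^n$, $r\in\mathbb{Q}^*_+$, and $\overline{B(a,r)}\subset B(O,1)$, set
\begin{gather*}
\mathcal{D}_{a,r}:=\bigl\{H\in\mathcal{C}^\infty(B(O,1)),\ \det(\operatorname{Hess}_x(H))=0\ \text{for all}\ x\in B(a,r)\bigr\}.
\end{gather*}
Any $H\in\mathcal{D}$ has, by definition, a ball on which $\det(\operatorname{Hess}(H))$ vanishes, and one can always fit a rational ball inside it, so $\mathcal{D}=\bigcup_{a,r}\mathcal{D}_{a,r}$, a countable union.

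Next I would verify that each $\mathcal{D}_{a,r}$ is closed in the Fréchet topology. This is routine: convergence in the Fréchet topology entails uniform convergence of all partial derivatives on compact subsets, so the map $H\mapsto\det(\operatorname{Hess}_\cdot(H))$ is continuous from $\mathcal{C}^\infty(B(O,1))$ to $\mathcal{C}(B(a,r),\mathbb{R})$ (with the sup norm), and $\mathcal{D}_{a,r}$ is the preimage of the closed set $\{0\}$.

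The heart of the argument is showing that each $\mathcal{D}_{a,r}$ has empty interior. For this I would use the quadratic perturbation $Q(x):=\frac12\sum_{i=1}^n x_i^2$ and consider $H_\epsilon:=H+\epsilon Q$. Since $Q$ and all its derivatives are bounded on $B(O,1)$, $H_\epsilon\to H$ in the Fréchet topology as $\epsilon\to 0$. The Hessian is $\operatorname{Hess}(H_\epsilon)=\operatorname{Hess}(H)+\epsilon I$, so for any fixed $x_0\in B(a,r)$, the map
\begin{gather*}
\epsilon\mapsto\det(\operatorname{Hess}_{x_0}(H)+\epsilon I)
\end{gather*}
is, up to sign, the characteristic polynomial of $\operatorname{Hess}_{x_0}(H)$ evaluated at $-\epsilon$, hence a monic polynomial of degree $n$ in $\epsilon$, and in particular nonzero. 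Thus it vanishes at only finitely many values of $\epsilon$, so one can pick $\epsilon$ arbitrarily small with $\det(\operatorname{Hess}_{x_0}(H_\epsilon))\neq 0$, giving $H_\epsilon\notin\mathcal{D}_{a,r}$.

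Combining the three steps, $\mathcal{D}$ is a countable union of closed nowhere-dense sets, that is, meagre. The only subtle point is the empty-interior step, where the trick is to recognize that adding $\epsilon I$ to the Hessian turns a possibly degenerate matrix into one whose determinant is a nontrivial polynomial in $\epsilon$; after that, continuity of the determinant in $x$ is enough to conclude, and no cleverer perturbation is needed.
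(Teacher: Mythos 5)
Your proof is correct and follows essentially the same route as the paper: decompose $\mathcal{D}$ into a countable union of closed sets indexed by a countable family of balls, then kill the interior by a quadratic perturbation that makes the Hessian invertible at a single point of the ball. The only (immaterial) difference is the choice of perturbation --- you add $\epsilon\,\mathrm{Id}$ and invoke the characteristic polynomial, while the paper replaces $\operatorname{Hess}_a(H)$ by a nearby invertible symmetric matrix via a quadratic correction term.
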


\begin{proof}
	Let $({\cal O}_k)_{k\in\mathbb{N}}$ be a countable neighborhood basis of $B(O,1)$. Then, clearly,
\begin{gather*}
{\cal D}=\bigcup_{k\in\mathbb{N}}{\cal D}_k,
\end{gather*}
	where ${\cal D}_k$ is the set of functions $H$ such that $\det\operatorname{Hess}H=0$ everywhere on ${\cal O}_k$. Each of the sets~${\cal D}_k$ is closed. Let us prove that $\mathring{\cal D}_k=\varnothing$. For that, let us fix some
	$H\in{\cal D}_k$ and some point~$a$ in~${\cal O}_k$.
	Arbitrarily close to $\operatorname{Hess}_a(H)$ we can find a symmetric matrix $A$ with a rank equal to~$n$. Let us define the function $f$ on $B(O,1)$ by
\begin{gather*}
f(x)=H(x)+{}^txAx-{}^tx\operatorname{Hess}_a(H)x,
\end{gather*}
	where $x$ denotes here the column vector of the $x_i$.
	This function $f$ can be arbitrarily close to $H$ according as to the choice made on $A$ and does not belong to ${\cal D}_k$, so $H\not\in\mathring{\cal D}_k$. So ${\cal D}$ is a meagre set as a countable union of nowhere dense closed sets.
\end{proof}

According to Theorems \ref{BHthm} and \ref{mainthm} we can state that:

\begin{Theorem}\label{mainthmAL}
	Generically, an Arnold--Liouville system admits none bi-Hamiltonian structure in the sense of Theorem~$\ref{BHthm}$.
\end{Theorem}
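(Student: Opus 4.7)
The plan is to combine the three ingredients already in hand: Theorem~\ref{BHthm}, Theorem~\ref{mainthm}, and the preceding lemma on $\mathcal{D}$. First I would read Theorem~\ref{BHthm} in contrapositive form: if a Hamiltonian $H$ is simultaneously non-degenerate (condition~(iii)) and fails to lie in $\mathcal{S}_{\rm BH}$, then no second symplectic form $\omega_1$ on $B(O,1)\times\mathbb{T}^n$ can satisfy conditions~(i)--(ii) of Theorem~\ref{BHthm}. Equivalently, the set of Hamiltonians $H$ for which a bi-Hamiltonian structure in that sense \emph{does} exist is contained in the union $\mathcal{D}\cup\mathcal{S}_{\rm BH}$.

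Next I would invoke the two meagreness statements: the lemma just proved shows $\mathcal{D}$ is meagre, and Theorem~\ref{mainthm} shows $\mathcal{S}_{\rm BH}$ is meagre in the Fréchet space $\mathcal{C}^\infty(B(O,1),\mathbb{R})$. Since a finite union of meagre subsets of a topological space is meagre, the union $\mathcal{D}\cup\mathcal{S}_{\rm BH}$ is itself meagre. As $\mathcal{C}^\infty(B(O,1),\mathbb{R})$ with its Fréchet topology is a Baire space, the complement of this meagre set is residual and in particular dense, so for every $H$ outside $\mathcal{D}\cup\mathcal{S}_{\rm BH}$ the Arnold--Liouville system $(B(O,1)\times\mathbb{T}^n,\omega_0,H)$ admits no bi-Hamiltonian structure in the sense of Theorem~\ref{BHthm}. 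This is exactly the genericity statement required.

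I do not expect any real obstacle here: all of the analytic content has been packaged into Theorem~\ref{mainthm} (via the Faà di Bruno formula and the comparison of $\dim\mathcal{H}_k^n$ with the number of parameters in $\mathcal{L}_k$), and the only subtlety is remembering that Theorem~\ref{BHthm} carries the non-degeneracy hypothesis~(iii). It is precisely this hypothesis that forces us to dispose of degenerate Hamiltonians separately through the lemma on $\mathcal{D}$ rather than to absorb them into BH-separability. Once this is observed, the conclusion is a one-line application of the Baire category theorem to the union of two meagre sets.
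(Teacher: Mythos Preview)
Your proposal is correct and follows essentially the same route as the paper: you show $\mathcal{H}_{\rm BH}\subset\mathcal{D}\cup\mathcal{S}_{\rm BH}$ via the contrapositive of Theorem~\ref{BHthm}, then invoke the lemma and Theorem~\ref{mainthm} to conclude that this union is meagre. The paper writes the same argument as the decomposition $\mathcal{H}_{\rm BH}=(\mathcal{D}\cap\mathcal{H}_{\rm BH})\cup(\mathcal{ND}\cap\mathcal{H}_{\rm BH})$ with each piece contained in a meagre set; your additional remark on the Baire property is a harmless embellishment rather than a genuine departure.
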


By this statement we understand the following result: the set ${\cal H}_{\rm BH}$ of Hamiltonians $H\in{\cal C}^{\infty}(B(O,1),\mathbb{R})$ for which the Arnold--Liouville system $(M_0,\omega_0,H)$ admits a bi-Hamiltonian structure (with the conditions of Theorem \ref{BHthm}) is meagre for the Fr\'echet's topology.

\begin{proof}
	Using the notation of the previous lemma we can write
\begin{gather*}
{\cal H}_{\rm BH}=\big({\cal D}\cap{\cal H}_{\rm BH}\big)\cup\big({\cal ND}\cap{\cal H}_{\rm BH}\big).
\end{gather*}
	According to Theorem \ref{BHthm}, we have the inclusion
	$\left({\cal ND}\cap{\cal H}_{\rm BH}\right)\subset{\cal S}_{\rm BH},$
	so Theorem \ref{mainthm} implies that ${\cal ND}\cap{\cal H}_{\rm BH}$ is meagre.
	But ${\cal D}\cap{\cal H}_{\rm BH}\subset{\cal D}$ so, using the previous lemma, we get that ${\cal D}\cap{\cal H}_{\rm BH}$ is meagre. It results in that ${\cal H}_{\rm BH}$ is meagre as the union of two meagre sets.
\end{proof}

\section{BH-separability of a special class of polynomial Hamiltonians}

\subsection{Preliminary considerations}

The previous sections have shown that among the smooth functions, the BH-separable ones are rather ``rare''. Nevertheless, of course, they do exist. In this last part, we will determine among a class of particular Hamiltonians with two variables,\footnote{These two variables will be denoted here $x$, $y$ and can be interpreted as the two action coordinates of a~Hamiltonian function written in action-angle coordinates.} which are BH-separable. This study will be doubly useful: first, obviously, to obtain these rare candidates but also to visualize perfectly and concretely the previously studied phenomenon of scarcity in a restricted family of functions.

A challenging study would be to determine (in the case of two variables) which Hamiltonians of the form $H=Q+C$, where $Q$ is a non-degenerate quadratic form and $C$ a general cubic polynomial, are BH-separable.
Let us begin with a few remarks.

$\bullet$ The quadratic part $Q$ has a signature $(2,0)$, $(1,1)$ or $(0,2)$ because it is assumed to be non-degenerate. So, using a linear change of coordinates it can be written $x^2+y^2$, $x^2-y^2$ or $-x^2-y^2$. Unfortunately, under a change of coordinates, even under a {\it linear} change of coordinates, the property of BH-separability does not necessarily remain. Indeed, the problem is that the coordinate change must separate both functions $x$ and $y$. So, the only study of these three cases would not allow us to cover all the situations.

$\bullet$ The study of the three cases $Q=x^2+y^2$, $Q=x^2-y^2$, $Q=-x^2-y^2$ (the third would be essentially the same as the first) is tedious and leads to awful calculations, even using some symbolic computing tool. For all those reasons we will present here only one of these cases and we have chosen the case where $Q$ has signature $(1,1)$. Moreover, we will deal with $Q=xy$ instead of $Q=x^2-y^2$, the first case being a little more pleasant than the last.
So we will search, inside the family
\begin{gather*}
xy+ax^3+bx^2y+cxy^2+dy^3
\end{gather*}
indexed by $(a,b,c,d)\in\mathbb{R}^4$,
the $BH$-separable elements. We can remark that this family is a~$4$-dimensional affine subspace of the space of smooth functions.

We can also remark that all these Hamiltonians are locally separable in the sense studied in~\cite{BouBr}, because they are Morse's functions at the origin.\footnote{We mean that the origin is a critical point of the function and that its Hessian matrix at this point is invertible.}

$\bullet$ If $H$ is BH-separable
there is some change of variable $(x,y)\mapsto (u,v)$ around the origin~$O$, fixing $O$, and which separates $H$, $x$ and $y$. In other words we have six smooth functions $f$, $g$, $h$, $k$, $H_1$, $H_2$, each of them depending only on one variable ($u$ or $v$) such that
\begin{gather*}
x=f(u)+g(v),\qquad y=h(u)+k(v)
\end{gather*}
and
\begin{gather}
H(f(u)+g(v),h(u)+k(v))=H_1(u)+H_2(v)\tag{$*$}.
\end{gather}
Replacing if necessary $f$ by $f-f(0)$ we can assume that $f(0)=0$. Since the change of variable fixes $O$, then $g(0)=0$. In the same way we can suppose that $h(0)=k(0)=0$. Because the considered map is a change of variable, then necessarily $(f'(0),g'(0))\not=(0,0)$ and $(h'(0),k'(0))\not=(0,0)$.
Because in these families of Hamiltonians the $1$-jet at $(0,0)$ is zero, it~remains zero in coordinates $u$, $v$ and the quadratic part (the Hessian) is tensorial, so its signature is invariant under any change of coordinates, in particular in coordinates $u$, $v$. So, writing
\begin{gather*}
H_1(u)=\alpha u^2+o\big(u^2\big),\qquad H_2(v)=\beta v^2+o\big(v^2\big)
\end{gather*}
and comparing the two expressions of $H$ in coordinates $u$, $v$ given by $(*)$ we get
\begin{gather*}
(a_1u+a_2v)(a_3u+a_4v)=\alpha u^2+\beta v^2,
\end{gather*}
where
$\left(\begin{smallmatrix}
a_1&a_2\cr a_3& a_4
\end{smallmatrix}\right)$
denotes the Jacobian matrix at the origin of the map $(u,v)\mapsto(x,y)$.
Because the signature of the considered quadratic form is $(1,1)$, then necessarily $\alpha\not=0$ and $\beta\not=0$ and because $\alpha=a_1a_3$ and $\beta=a_2a_4$, none of the real numbers $a_1$, $a_2$, $a_3$, $a_4$ is zero. So, without loss of generality, we can assume that $x=u+v$ (or $y=u+v$). Indeed, because $a_1=f'(0)\not=0$, the map $u\mapsto U=f(u)$ is a local diffeomorphism around $0$ fixing $0$ and we can write $u=f^{-1}(U)$; in~the same way, because $a_2=g'(0)\not=0$, we get locally $v=g^{-1}(V)$ and so we can write $x=U+V$ and $y=h\big(f^{-1}(U)\big)+k\big(g^{-1}(V)\big)$. The same thing could be done with the functions~$h$ and~$k$ instead of $f$ and $g$ in order to get $y=U+V$ if we would wish it.\footnote{If we chose to deal with $Q=x^2-y^2$, or with $Q=x^2+y^2$ the situation would be quite different. Indeed, we~get
\begin{gather*}
(a_1u+a_2v)^2\pm(a_3u+a_4v)^2=\alpha u^2+\beta v^2,
\end{gather*}
	a condition which does not imply that the four coefficients $a_1$, $a_2$, $a_3$, $a_4$ are not zero. For example $a_2$ may be zero but, in this case, $a_1\not=0$ and $a_4\not=0$ since $a_1a_4-a_2a_3\not=0$. It follows that in this case we can always assume that $x=u+q(v)$ and $y=p(u)+v$ or $x=p(u)+v$ and $y=u+q(v)$.}

\subsection{Some families of BH-separable Hamiltonians}

The previous discussion allows us to assume that if a suitable change of coordinates exists then we can choose $x=u+v$ and $y=p(u)+q(v)$ or $y=u+v$ and $x=p(u)+q(v)$. We will deal with the first case then we will exchange the roles of $x$ and $y$ to get all the cases. The condition of BH-separability requires that
\begin{gather}\label{Huv}
\frac{\partial^2}{\partial u\partial v}\big(H(u+v,p(u)+q(v))\big)=0.
\end{gather}
Writing the relation (\ref{Huv}) for $H=xy+ax^3+bx^2y+cxy^2+dy^3$ we get
\begin{gather}
6a(u+v)+2b(p+q)+(p'+q')(1+2b(u+v)+2c(p+q))\nonumber
\\ \qquad
{}+p'q'(6d(p+q)+2c(u+v))=0.\label{Huvfor}
\end{gather}
Evaluating the relation (\ref{Huvfor})
at $u=v=0$, we get $q'(0)=-p'(0)$. Let us denote $k=p'(0)$.
Now, evaluating separately (\ref{Huvfor}) at $v=0$, then at $u=0$, we get the two differential equations
\begin{gather*}
6au+2bp+(p'-k)(1+2bu+2cp)-kp'(6dp+2cu)=0
\end{gather*}
and
\begin{gather*}
6av+2bq+(q'+k)(1+2bv+2cq)+kq'(6dq+2cv)=0.
\end{gather*}
These two ODE are equivalent by the change $k\mapsto-k$.
Rewriting the first one we get
\begin{gather*}
p'(1+2bu+2cp-6kdp-2kcu)+6au+2bp-k(1+2bu+2cp)=0
\end{gather*}
or, denoting $A=2c-6kd$, $B=2(b-kc)$ and $C=6a-2kb$,
\begin{gather*}
p'(Ap+Bu+1)+Bp+Cu-k=0.
\end{gather*}
Let us suppose that $A\not=0$ and let us denote $L=Ap+Bu+1$. Then the last differential equation can be written as
\begin{gather*}
L'L+\big(AC-B^2\big)u-B-Ak=0.
\end{gather*}
Taking into account that $L(0)=1$, we deduce that
$\frac{L^2}{2}+\big(AC-B^2\big)\frac{u^2}2-(B+Ak)u-\frac12=0$ and so
that
$L^2=-\big(AC-B^2\big)u^2+2(B+Ak)u+1$, or
$L=\sqrt{-\big(AC-B^2\big)u^2+2(B+Ak)u+1}$, in~order to finally get
\begin{gather}\label{pu}
p(u)=\frac{-1-Bu+\sqrt{-\big(AC-B^2\big)u^2+2(B+Ak)u+1}}{A}.
\end{gather}

We get the formula for $q$ replacing $k$ by $-k$:
\begin{gather}\label{qv}
q(v)=\frac{-1-B'v+\sqrt{-\big(A'C'-B'^2\big)v^2+2(B'-A'k)v+1}}{A'},
\end{gather}
where $A'=2c+6kd$, $B'=2(b+kc)$ and $C'=6a+2kb$.

So, the previous formulae (\ref{pu}) and (\ref{qv}) give the necessary form of a change of variables $(u,v)\mapsto(x=u+v,\,y=p(u)+q(v))$ which could separate the Hamiltonian $H$ in turn. Let us denote
\begin{gather*}
\widetilde{H}(u,v):=H(u+v,p(u)+q(v)),
\end{gather*}
the functions $p$ and $q$ being replaced by the expressions obtained in (\ref{pu}) and (\ref{qv}) and $\widetilde{H}_{u^kv^l}$ the derivatives $\frac{\partial^{k+l}\widetilde{H}}{\partial u^k\partial v^l}$.
All the terms $\widetilde{H}_{u^kv^l}(0,0)$ have to be zero. Using some symbolic computing tool\footnote{For instance Maple.} we verify that
\begin{gather*}
\widetilde{H}_{uv}(0,0)=\widetilde{H}_{u^2v}(0,0)=\widetilde{H}_{uv^2}(0,0)=0.
\end{gather*}
But higher derivatives do not vanish. Indeed we get successively:
\begin{gather}\label{eq1}
\widetilde{H}_{u^2v^2}(0,0)=-24\big(cdk^4+(3ad-bc)k^2+ab\big),
\\
\widetilde{H}_{u^3v^2}(0,0)=24\big(3dk^3-ck^2-bk+3a\big)\nonumber
\\ \hphantom{\widetilde{H}_{u^3v^2}(0,0)=}
{}\times\big({-}6cdk^3-\big(c^2+3bd\big)k^2+3(bc-3ad)k+3ac+b^2\big)
\label{eq2}
\end{gather}
and
\begin{gather}\label{eq3}
\widetilde{H}_{u^3v^3}(0,0)=-288c\big(3dk^3-ck^2-bk+3a\big)\big({-}3dk^2+b\big)\big({-}3dk^3-ck^2+bk+3a\big).
\end{gather}
This last equality is especially interesting and we will base the next discussion on it by distinguishing several cases. We emphasize that all of the following calculations were performed using a symbolic calculation tool.

\subsection*{First case: $\boldsymbol {c=0}$}

Let us first notice that in this case we must assume $d\not=0$ because if $d=0$, expressions of the functions $p$ and $q$ have no sense (denominators vanish). So the case $c=d=0$ will have to be studied separately.

In this case, the condition (\ref{eq3}) is, of course, satisfied and the two others ((\ref{eq1}) and (\ref{eq2})) give respectively
\begin{gather}\label{relc0}
a\big(3dk^2+b\big)=0\qquad \text{and}\qquad \big(3dk^3-bk+3a\big)\big({-}3bdk^2-9adk+b^2\big)=0.
\end{gather}
These two conditions invite us to look at the special case $a=0$. If $a=0$ then the first of the previous relations is satisfied and the second one gives, because $k\not=0$, $b\big(3dk^2-b\big)^2=0$. Consequently, the next subcases have to be considered:
\begin{itemize}\itemsep=0pt

\item $a=b=0$: $H(x,y)=xy+dy^3$.
In this case the functions $p$ and $q$ become
\begin{gather*}
p(u)=\frac{-1+\sqrt{1-12dk^2u}}{6dk},\qquad q(v)=\frac{-1+\sqrt{1-12dk^2v}}{6dk},
\end{gather*}
and they allow us to separate $H$ since
\begin{gather*}
\widetilde{H}(u,v)=\frac{u\sqrt{1-12dk^2u}}{9dk} +\frac{\sqrt{1-12dk^2u}}{54d^2k^3}-\frac{v\sqrt{1-12dk^2v}}{9dk}-\frac{\sqrt{1-12dk^2v}}{54d^2k^3}.
\end{gather*}

\item $a=0$ and $b=3dk^2$: $H=xy+3dk^2xy^2+dy^3$.
In this case we get
\begin{gather*}
p(u)=-ku,\qquad q(v)=kv
\end{gather*}
and
\begin{gather*}
\widetilde{H}(u,v)=-ku^2-4k^3du^3+kv^2+4k^3dv^3.
\end{gather*}
So $H$ is BH-separable.
\end{itemize}

Now, if $a\not=0$, necessarily $b=-3dk^2$. In this case the second part of the relation (\ref{relc0}) writes $27kd\big(2dk^3+a\big)\big(2dk^3-a\big)=0$. Because $k$ and $d$ are not zero, we get two possibilities: $a=\pm2dk^3$. Replacing these values in relation~(\ref{eq2}) we do not get zero so the case $a\not=0$ cannot lead to a~case of BH-separability when $c=0$.

\subsection*{Second case: $\boldsymbol{b=3dk^2}$}

Under this assumption, condition (\ref{eq1}) becomes
\begin{gather*}%\label{eq11}
48dk^2\big(3a-ck^2\big)=0.
\end{gather*}
This condition leads us to consider the two subcases $d=0$ and $3a-ck^2=0$.

Under the assumption $d=0$, condition (\ref{eq2}) becomes
\begin{gather*}%\label{eq21}
24c\big(3a-ck^2\big)^2=0,
\end{gather*}
and so the relation $3a-ck^2=0$ must be verified since we have already noted that $c$ and $d$ cannot be simultaneously equal to zero in the present discussion (this particular case will be discussed separately). So, let us only assume:
\begin{itemize}\itemsep=0pt
\item $3a-ck^2=0$: $H=xy+\frac{ck^2}{3}x^3+3dk^2x^2y+cxy^2+dy^3$.
In this situation, we get
\begin{gather*}
p(u)=-ku,\qquad q(k)=kv,
\end{gather*}
and
\begin{gather*}
\widetilde{H}=-ku^2+kv^2+\frac{4}{3}ck^2u^3+\frac{4}{3}ck^2v^3-4dk^3u^3+4dk^3v^3,
\end{gather*}
and so $H$ is BH-separable.
\end{itemize}

\subsection*{Third case: $\boldsymbol{3dk^3-ck^2-bk+3a=0}$}

In this case, in order to find a solution for the function $p(u)$ such that $p(0)=0$ it is necessary\footnote{Indeed, without any additional condition, the symbolic calculus tool we used does not indicate any solution for this Cauchy problem. Then, leaving the initial condition, we obtain the general solution depending on an arbitrary constant. A simple calculation then gives that if the solution $p$ satisfies $p(0)=0$ then necessarily $c+3dk=0$. Conversely, under this assumption, we get $p(u)=-ku$.} that $c+3dk=0$ and, under this supplementary condition we find $p(u)=-ku$ and $q(v)=kv$. Calculation of $\widetilde{H}$ gives
\begin{gather*}
\widetilde{H}=-ku^2+kv^2-6k^3du^3-6k^3duv^2-4k^3dv^3-\frac{2}{3}bku^3+2bkuv^2+\frac{4}{3}bkv^3,
\end{gather*}
which is separate in $u$, $v$ if and only if $b-3dk^2=0$, giving finally
\begin{gather*}
\widetilde{H}=-ku^2+kv^2-8k^3du^3.
\end{gather*}
In this case, the assumption of all the relations leads to
$a=-dk^3$, $b=3dk^2$, $c=-3dk$, so
\begin{gather*}
H=xy-dk^3x^3+3dk^2x^2y-3dkxy^2+dy^3.
\end{gather*}

\subsection*{Fourth case: $\boldsymbol{-3dk^3-ck^2+bk+3a=0}$}

It is the same situation as the previous one replacing $k$ by $-k$.

Partial conclusion: the previous discussion and calculations gave four families of BH-separable Hamiltonians:
\begin{gather*}
H=xy+dy^3,
\\
H=xy+3dk^2x^2y+dy^3,
\\
H=xy+\frac{ck^2}{3}x^3+3dk^2x^2y+cxy^2+dy^3,
\\
H=xy-dk^3x^3+3dk^2x^2y-3dkxy^2+dy^3.
\end{gather*}

The second one is a special case of the third one with $c=0$ and the fourth one is also a~special case of the third one with $c=-3dk$. So we only need to consider two families:
\begin{gather*}
H=xy+dy^3\qquad \text{and}\qquad H=xy+\frac{ck^2}{3}x^3+3dk^2x^2y+cxy^2+dy^3.
\end{gather*}
We can notice that the first one seems to be a particular case of the second one corresponding to $k=0$. But we have to remind ourselves that here the parameter $k$ comes from the linear part of the change of coordinates, namely $k=p'(0)$ and so does not become equal to zero. By the way, the reader could check that if we choose $k=0$ and $cd\not=0$ in the second family we do not obtain a BH-separable Hamiltonian. This fact clearly shows that we must necessarily avoid $k=0$ in the second family.
So, for the moment we have two families of BH-separable Hamiltonians, the first one is a $1$-parameter family and the second one, a $3$-parameters family, namely (taking $3c$ instead of $c$ in the second one):
\begin{gather}
H=xy+dy^3,\qquad d\in\mathbb{R},\nonumber
\\
H=xy+ck^2x^3+3dk^2x^2y+3cxy^2+dy^3,\qquad (c,d,k)\in\mathbb{R}^2\times\mathbb{R}^*.\label{Hxy}
\end{gather}
It remains to study the case which we did not deal with: $c=d=0$.
In this case, we find that necessarily
\begin{gather*}
p(u)=\frac{-ku-3au^2-bu^2k}{1+2bu},\qquad q(v)=\frac{kv-3av^2+bv^2k}{1+2bv}.
\end{gather*}
Replacing them in $H$ we get that the term in $u^2v^2$ is equal to $-6ab$, so necessarily $a=0$ or $b=0$. For example, if $a=0$, we get that the term in $u^2v^3$ is equal to $2kb^3$ and so vanishes if and only if $b=0$. If we suppose $b=0$, we get in the same manner that $a=0$. So, once we have $c=d=0$, right away we get $a=b=0$ and so $H=xy$, which is a trivial case already contained in the first family for $d=0$.
The last thing that we have to consider is that we have prioritized the variable $x$ over the variable $y$ and so we have to exchange $x$ and $y$ to get forgotten Hamiltonians, namely
\begin{gather}
H=xy+dx^3,\qquad d\in\mathbb{R},\nonumber
\\
H=xy+dx^3+3cx^2y+3dk^2xy^2+ck^2y^3,\qquad (c,d,k)\in\mathbb{R}^2\times\mathbb{R}^*.\label{Hyx}
\end{gather}
The two forms (\ref{Hxy}) and (\ref{Hyx}) are not equivalent because, for example, $H=xy+y^3$ belongs to one of the families (\ref{Hxy}) and not to one of the families (\ref{Hyx}). We can summarize all this discussion by this result:

\begin{Theorem}
	Among Hamiltonians of the form
\begin{gather*}
H=xy+ax^3+bx^2y+cxy^2+dy^3,\qquad (a,b,c,d)\in\mathbb{R}^4,
\end{gather*}
	the only ones to be BH-separable are those belonging to one of the four families:
\begin{gather*}	
H_d=xy+dy^3,\qquad d\in\mathbb{R},
\\	
H_d=xy+dx^3,\qquad d\in\mathbb{R},
\\	
H_{c,d,k}=xy+ck^2x^3+3dk^2x^2y+3cxy^2+dy^3,\qquad (c,d,k)\in\mathbb{R}^2\times\mathbb{R}^*,
\\	
H_{c,d,k}=xy+dx^3+3cx^2y+3dk^2xy^2+ck^2y^3,\qquad (c,d,k)\in\mathbb{R}^2\times\mathbb{R}^*.	
\end{gather*}
\end{Theorem}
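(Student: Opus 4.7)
The plan is to exploit the reduction established in the preliminary considerations of Section~4.1: any BH-separable Hamiltonian in this affine family admits a separating change of coordinates of the form $x=u+v$, $y=p(u)+q(v)$ (or, by symmetry, the mirror form with the roles of $x$ and $y$ exchanged), where $p,q$ are smooth one-variable functions with $p(0)=q(0)=0$ and $p'(0)=-q'(0)=:k$. BH-separability is then equivalent to the single PDE $\partial_u\partial_v\tilde H=0$ with $\tilde H(u,v):=H(u+v,p(u)+q(v))$, that is, to equation~(\ref{Huvfor}). So the proof reduces to finding all $(a,b,c,d)\in\mathbb{R}^4$ for which some such pair $(p,q)$ exists.

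First I would restrict (\ref{Huvfor}) to the two axes $v=0$ and $u=0$. This decouples the PDE into two ODEs which, because the coefficient of the leading derivative is nonzero at the origin, admit unique local solutions given by the explicit formulas (\ref{pu})~and~(\ref{qv}): in effect, $p$ and $q$ are forced by $(a,b,c,d,k)$. Substituting these back into $\tilde H$ reduces the problem to requiring that every higher mixed partial $\tilde H_{u^iv^j}(0,0)$ vanish, which is a polynomial system in the five scalars $(a,b,c,d,k)$.

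Next I would translate this infinite tower of conditions into a finite algebraic problem. The first three mixed partials vanish automatically; the critical input is equation~(\ref{eq3}), which factors as a product of four polynomials in $(a,b,c,d,k)$. So the argument splits into four cases corresponding to the vanishing of each factor---(i)~$c=0$, (ii)~$b=3dk^2$, (iii)~$3dk^3-ck^2-bk+3a=0$, (iv)~$-3dk^3-ck^2+bk+3a=0$---together with the degenerate subcase $c=d=0$, in which the explicit formulas for $p,q$ break down and must be reconstructed from scratch. Within each case, substituting the factor relation into (\ref{eq1}) and (\ref{eq2}) yields further polynomial constraints that can be solved explicitly for the admissible tuples; this produces precisely the families $H=xy+dy^3$ and $H=xy+ck^2x^3+3dk^2x^2y+3cxy^2+dy^3$. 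Finally, repeating the analysis with $x\leftrightarrow y$---a genuinely distinct ansatz, as witnessed by $H=xy+y^3$ belonging to one family but not the other---produces the two mirror families, completing the four-family list.

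The main obstacle will be verifying \emph{sufficiency} rather than necessity: once a candidate tuple satisfies all the necessary algebraic conditions, one must confirm that $\tilde H(u,v)$ genuinely decomposes as a sum $\tilde H_1(u)+\tilde H_2(v)$, not merely that finitely many of its mixed partials vanish. Fortunately, in each surviving case the explicit $p,q$ turn out to be of a very simple form (typically the linear $p(u)=-ku$, $q(v)=kv$, apart from the square-root case in the first family), so this verification reduces to a direct, if tedious, polynomial expansion best carried out with a computer algebra system, as the authors indicate.
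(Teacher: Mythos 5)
Your proposal follows essentially the same route as the paper: reduce to the ansatz $x=u+v$, $y=p(u)+q(v)$, solve the two boundary ODEs to force $p$ and $q$ via the explicit formulas, use the factorization of $\widetilde{H}_{u^3v^3}(0,0)$ to split into the same four cases (plus the degenerate subcase $c=d=0$), and conclude sufficiency by explicitly verifying that $\widetilde{H}$ separates, then symmetrize in $x$ and $y$. This matches the paper's argument, including your correct observation that sufficiency must be checked by direct expansion rather than by the vanishing of finitely many mixed partials.
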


\begin{Remark}
	We can outline that for the two $1$-parameter families the change of coordinates allowing to separate $H$, $x$ and $y$ is not linear whereas for the two others families with $3$ parameters, it is linear.
\end{Remark}

\begin{Remark}
	This result is a very good illustration of the main theorem of this paper, namely Theorem~\ref{mainthm} stating that Hamiltonians which are not BH-separable are generic. Indeed the family of Hamiltonians studied above is a $4$-dimensional affine space whereas, in that space, the set ${\cal S}_{\rm BH}$ of convenient (BH-separable) functions consists only in $d$-parameters families with $d<4$. Precisely, the smooth maps
\begin{gather*}
(c,d,k)\mapsto\big(ck^2,3dk^2,3c,d\big)\qquad \text{and}\qquad (c,d,k)\mapsto\big(d, 3c,3dk^2,ck^2\big)
\end{gather*}
	have clearly a Jacobian matrix with a rank $r\in\{2,3\}$ and so the set ${\cal S}_{\rm BH}$ is a finite union of regular hypersurfaces and $2$-codimensional submanifolds, so something with a Lebesgue measure equal to zero and which is a meagre subset of the ambient affine space.
	
\end{Remark}

\subsection*{Acknowledgements}

We thank Timothy Neal for his proofreading and improving the english language of our paper. We also thank Roman G. Smirnov for drawing our attention to the importance of Lenard's early work in the genesis of the theory of bi-Hamiltonian systems. Finally, we thank the anonymous referees for their insightful comments and careful reading which greatly improved this article.

\pdfbookmark[1]{References}{ref}
\LastPageEnding

\end{document}